\newtheorem{theo}{Theorem}[section]
\newtheorem{prop}[theo]{Proposition}
\newtheorem{lemma}[theo]{Lemma}
\newtheorem{coro}[theo]{Corollary}
\theoremstyle{definition}
\newtheorem*{remark}{Remark}
\newcommand{\FF}{{\cal F}}
\newcommand{\HH}{{\cal H}}
\newcommand{\PP}{{\cal P}}
\newcommand{\eps}{{\varepsilon}}
\DeclareMathOperator{\ex}{ex}
\begin{document}
\date{}

\title{
Largest subgraph from a hereditary property in a random graph
}
%\\ {\small exher.tex} }

\author{Noga Alon\thanks{Department of Mathematics, Princeton University,
Princeton, NJ 08544, USA and
Schools of Mathematics and
Computer Science, Tel Aviv University, Tel Aviv 6997801,
Israel.
Email: {\tt nalon@math.princeton.edu}.
Research supported in part by
NSF grant DMS-2154082 and by USA-Israel BSF grant 2018267.}
\and
Michael Krivelevich\thanks{
  School of Mathematical Sciences,
  Tel Aviv University, Tel Aviv 6997801, Israel.
  Email: \texttt{krivelev@tauex.tau.ac.il}.
  Research supported in part by USA-Israel BSF grant 2018267.}
\and
Wojciech Samotij\thanks{
  School of Mathematical Sciences,
  Tel Aviv University, Tel Aviv 6997801, Israel.
  Email: \texttt{samotij@tauex.tau.ac.il}.
  Research supported in part by the Israel Science Foundation grant 2110/22.}
}

\maketitle
\begin{abstract}
We prove that for every non-trivial hereditary family of graphs $\PP$ and for every
fixed $p \in (0,1)$, the maximum possible number of edges in
a subgraph of
the random graph $G(n,p)$ which belongs to $\PP$ is, with high probability,
$$
\left(1-\frac{1}{k-1}+o(1)\right)p{n \choose 2},
$$
where $k$ is the minimum chromatic number of a graph that does not belong
to $\PP$.
% This is is known, for a wider tight range of the probability
%$p=p(n)$, for monotone properties, but the class of hereditary properties
%is far larger.
%The proof is short, and the result suggests several
%intriguing open problems.
\end{abstract}

\section{Introduction}

Let $\PP$ be an arbitrary hereditary property of graphs. We assume throughout that $\PP$ is non-trivial, i.e., it contains all edgeless graphs and misses some graph.
For a graph $G$, let
$\ex(G,\PP)$ denote the maximum number of edges of a subgraph of
$G$ that belongs to $\PP$; the above definition of non-triviality guarantees that this number is well-defined.
In this note we determine,
for every fixed edge probability $p \in (0,1)$, the
typical asymptotic value of $\ex(G,\PP)$ for the random graph
$G=G(n,p)$ as $n$ tends to infinity. This is stated in the following
theorem.
\begin{theo}
\label{t11}
Let $\PP$ be a non-trivial hereditary property of graphs
and let
$k=k(\PP) \geq 2$ denote the minimum chromatic number of a graph that does
not belong to $\PP$. Then, for every fixed $p \in (0,1)$, the random
graph $G=G(n,p)$ satisfies, with high probability:
$$
\ex(G,\PP)=\left(1-\frac{1}{k-1}+o(1)\right)p {n \choose 2},
$$
where the $o(1)$-term tends to $0$ as $n$ tends to infinity.
\end{theo}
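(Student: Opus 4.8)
The plan is to prove the two bounds separately: the lower bound by an explicit construction, and the upper bound by a regularity argument that crucially exploits both the quasirandomness of $G$ and the hypothesis $p<1$.

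For the lower bound, observe first that since $k$ is the \emph{minimum} chromatic number of a graph outside $\PP$, every graph of chromatic number at most $k-1$ lies in $\PP$; in particular every $(k-1)$-partite graph does. I would therefore fix a balanced partition of $V(G)$ into $k-1$ parts and keep only the edges of $G$ running between distinct parts. The resulting subgraph is $(k-1)$-partite, hence in $\PP$, and the number of crossing pairs is $\left(1-\frac{1}{k-1}\right)\binom{n}{2}(1+o(1))$; a Chernoff bound together with a union bound over the balanced partitions shows that w.h.p.\ the number of crossing edges of $G$ is $\left(1-\frac{1}{k-1}+o(1)\right)p\binom{n}{2}$, which gives the lower bound on $\ex(G,\PP)$.

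For the upper bound I would fix a graph $F\notin\PP$ with $\chi(F)=k$ (which exists by the definition of $k$) together with a proper $k$-colouring $V(F)=I_1\cup\dots\cup I_k$, and aim to show that w.h.p.\ every subgraph $H\subseteq G$ with $e(H)\ge\left(1-\frac{1}{k-1}+\eps\right)p\binom{n}{2}$ contains an \emph{induced} copy of $F$; since $\PP$ is hereditary and $F\notin\PP$, this forces $H\notin\PP$ and hence bounds $\ex(G,\PP)$. First I would record that for constant $p$ the graph $G$ is w.h.p.\ quasirandom, so that every pair of large disjoint vertex sets spans edges with density $p+o(1)$, and in particular non-edge density $1-p+o(1)>0$. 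As $p$ is constant, $H$ is a \emph{dense} graph, so I can apply Szemer\'edi's regularity lemma to $H$ and form a reduced graph in which each pair of clusters $(V_i,V_j)$ is weighted by $d_H(V_i,V_j)/d_G(V_i,V_j)\approx d_H(V_i,V_j)/p\in[0,1]$. The edge count of $H$ translates into the weighted reduced graph having total weight at least $\left(1-\frac{1}{k-1}+\frac{\eps}{2}\right)\binom{m}{2}$, and a short weighted Tur\'an-type computation then produces $k$ clusters $W_1,\dots,W_k$ that are pairwise regular in $H$ with $H$-density at least $\delta p$ for some constant $\delta=\delta(\eps,k)>0$.

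The heart of the matter, and the step I expect to be the main obstacle, is to upgrade these $k$ clusters into an induced copy of $F$, since I must now realise not only the edges of $F$ but also its non-edges. Here $p<1$ is essential: every pair $(W_a,W_b)$ has $H$-density at most $d_G(W_a,W_b)=p+o(1)<1$ and at least $\delta p>0$, hence bounded away from both $0$ and $1$, and the same holds within each cluster, since $H[W_a]\subseteq G[W_a]$ has density at most $p+o(1)<1$. I would split each $W_a$ into $|I_a|$ sub-clusters, refining so that all sub-cluster pairs remain regular, and apply the \emph{induced} counting lemma with one sub-cluster per vertex of $F$: pairs of sub-clusters inside a common $W_a$ realise the non-edges within a colour class (density $<1-\eta$), while pairs of sub-clusters in distinct $W_a,W_b$ have density in $(\eta,1-\eta)$ and so realise either an edge or a non-edge of $F$ as required. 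All density hypotheses of the induced counting lemma are thereby met, producing positively many induced copies of $F$ in $H$ and the desired contradiction. The same argument run with $k=2$, where the weighted Tur\'an step only demands two clusters of positive $H$-density, shows that any $H\in\PP$ has $o\!\left(p\binom{n}{2}\right)$ edges, matching the degenerate case of the formula.
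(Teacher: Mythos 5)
Your lower bound is fine and essentially matches the paper's (the paper uses the fact that every graph has a $(k-1)$-partite subgraph containing a $1-\frac{1}{k-1}$ fraction of its edges rather than a fixed balanced partition, but both yield the same estimate). Your upper bound takes a genuinely different route: the paper avoids the regularity lemma altogether, instead invoking the Conlon--Gowers/Schacht supersaturation theorem in a \emph{sparse} random graph --- where $p\ll 1$ makes the copies of a forbidden $L\notin\PP$ outnumber the copies of its strict supergraphs, so some copy is induced --- to produce one finite graph $H$ with $\ex(H,\PP)<\bigl(1-\frac{1}{k-1}+\eps\bigr)e(H)$, and then transfers this to constant $p$ by double counting over induced copies of $H$ in $G(n,p)$, using Azuma's inequality to show that every edge lies in $(1\pm\eps)\mu$ such copies. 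A regularity-based proof along your lines is possible and is explicitly mentioned (and omitted) in the paper's concluding remarks.

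As written, however, your argument has a genuine gap at exactly the step you flag as the heart of the matter. You propose to ``split each $W_a$ into $|I_a|$ sub-clusters, refining so that all sub-cluster pairs remain regular.'' The regularity lemma applied to $H$ gives no control over the bipartite graph between two subsets of a \emph{single} cluster: regularity of the pairs $(W_a,W_b)$ with $a\neq b$ is inherited by large sub-pairs via the slicing lemma, but $H[W_a]$ itself is an arbitrary graph of density at most $p+o(1)$, and no partition of $W_a$ is guaranteed to produce regular pairs between its parts. Consequently the hypotheses of the induced counting lemma fail for precisely the pairs that must realize the non-edges of $F$ inside a colour class; knowing only that their density is below $1-\eta$ does not suffice, since the counting lemma's product formula requires regularity of \emph{every} pair. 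A standard repair is to use one \emph{whole} cluster of the original partition per vertex of $F$: by supersaturation the reduced graph of regular pairs with $d_H\ge\delta p$ contains $\Omega(m^{kt})$ copies of the complete $k$-partite pattern with parts of size $t=\max_a|I_a|$, of which only $O(\eps' m^{kt})$ contain an irregular pair inside a part, so for $\eps'$ small enough one finds such a pattern all of whose $\binom{kt}{2}$ cluster pairs are regular; within-part pairs then have density at most $p+o(1)<1$, between-part pairs have density in $[\delta p,\,p+o(1)]$, and the induced counting lemma applies. (Alternatively, one can embed $F$ greedily, handling within-class non-adjacencies via the fact that in a quasirandom $G$ every vertex has at least $(1-p-o(1))|W_a|$ non-neighbours in $W_a$.) Without some such step, the upper bound is incomplete.
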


In fact, our argument yields the assertion of the theorem under the 
weaker assumption that
$n^{-c} \le p = p(n) $ and $1-p \geq n^{-c}$
for some positive constant $c$ that depends only on
the property~$\PP$, see Proposition~\ref{prop:sparse-induced-Turan} 
below for the sparse case, the dense case follows in the same way.

In the statement above and throughout the rest of this note, the term
``with high probability" (whp, for short) means, as usual, with
probability tending to $1$ as $n$ tends to infinity.

The assertion of Theorem~\ref{t11} for principal monotone properties
(that is, properties defined by avoiding a single graph)
is known in a strong form,
and the precise range of the probability
$p=p(n)$ for which it holds has been determined in~\cite{CG, Sch},
following a considerable number of earlier results establishing special
cases. There are, however, far more hereditary properties than monotone
ones. A few arbitrarily chosen examples are perfect graphs,
graphs with no induced hole
of length $57$, graphs containing no set of $10$ vertices that span
exactly $34$ edges, or intersection graphs of discs in the plane.

The (short) proof of the theorem is presented in the next section.
The third and final section contains some concluding remarks and open problems.
Throughout the rest of this note, we systematically omit 
all floor and ceiling signs.

\section{The proof}

In this section, we prove Theorem~\ref{t11}.  Let $\PP$ be a hereditary property and put $k=k(\PP)~(\geq 2)$.
Note, first, that every graph $G$ contains a $(k-1)$-colorable subgraph with at least $(1-\frac{1}{k-1}) |E(G)|$ edges.
Since every such subgraph belongs to $\PP$ by the definition of $k=k(\PP)$,
when $G = G(n,p)$ with $p \gg n^{-2}$, then the assertion that
\begin{equation}
  \label{e21}
  \ex(G,\PP) \geq \left(1-\frac{1}{k-1}-
    o(1)\right)p {n \choose 2}
\end{equation}
holds whp follows from the (standard and easy) fact that $|E(G)|$ is concentrated around its expectation
as long as $pn^2$ tends to infinity.

We now turn to the proof of the upper bound on $\ex(G, \PP)$.
We start by observing that, for every graph $G$ and all $L \notin \PP$,
we have $\ex(G, \PP) \le \ex(G, \FF_L)$, where $\FF_L$ is the property
of not containing $L$ as an induced subgraph.  The following proposition
establishes an optimal upper bound on $\ex(G, \FF_L)$ in the case
where $G$ is a \emph{sparse} binomial random graph.  Recall that the
$2$-density of a nonempty graph $L$ is defined by
\[
  m_2(L) = \max\left\{\frac{|E(K)|-1}{|V(K)|-2} : K \subseteq L \text{ with } |E(K)| \ge 2\right\},
\]
when $L$ has at least two edges, and $m_2(K_2) = 1/2$.
This notion is defined so that $p \gg n^{-1/m_2(L)}$ precisely when,
for every subgraph $K \subseteq L$ with at least two edges,
the expected number of copies of $K$ in $G(n,p)$ is asymptotically much bigger than $pn^2$.
In particular, it is not hard to see that the lower-bound assumption $p \gg n^{-1/m_2(L)}$
is necessary.

\begin{prop}
  \label{prop:sparse-induced-Turan}
  Let $L$ be a nonempty graph.  If $n^{-1/m_2(L)} \ll p =p(n) \ll 1$, then the random graph
  $G = G(n,p)$ whp satisfies
  \[
    \ex(G, \FF_L) \le \left(1 - \frac{1}{\chi(L)-1}+o(1)\right) p\binom{n}{2}.
  \]
\end{prop}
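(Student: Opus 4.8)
Set $r=\chi(L)-1$ and fix a small constant $\eps>0$. It suffices to prove that whp every subgraph $H\subseteq G$ with $e(H)\ge(1-1/r+\eps)\,p\binom n2$ contains an induced copy of $L$; the stated bound then follows. My plan is a sparse-regularity argument, and I would begin by recording the two properties of $G=G(n,p)$ that it relies on, both holding whp in our range of $p$. First, upper-uniformity: every pair of disjoint vertex sets $A,B$ that are not too small spans at most $(1+o(1))p|A||B|$ edges (true once $p\gg\log n/n$). Second, the conclusion of the sparse counting lemma (the resolved Kohayakawa--{\L}uczak--R\"odl conjecture) for $L$, which is valid precisely when $p\gg n^{-1/m_2(L)}$; this is the only place the density hypothesis is consumed.

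Given such an $H$, I would apply the sparse (Kohayakawa--R\"odl) regularity lemma, rescaled by $p$, to obtain an equitable partition of $V(G)$ into $t=O_\eps(1)$ clusters $V_1,\dots,V_t$ in which all but an $\eps'$-fraction of the pairs are $(\eps',p)$-regular, for some $\eps'\ll\eps$. I then form the reduced graph $R$ on $[t]$ by joining $i$ to $j$ whenever $(V_i,V_j)$ is regular with relative density $d_{ij}:=d_H(V_i,V_j)/p\ge\delta$, for a small constant $\delta\ll\eps$. Using upper-uniformity to bound each $d_{ij}\le 1+o(1)$, the edges of $H$ lying inside clusters, across irregular pairs, or across pairs of relative density below $\delta$ number at most $(\eps/2)p\binom n2$ in total, so the retained pairs carry almost all of $e(H)$; a short computation then yields $e(R)\ge(1-1/r+\eps/3)\binom t2$. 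Since $R$ is a dense graph on $t$ vertices with $t$ large, the Erd\H{o}s--Stone--Simonovits theorem produces a copy of the complete $(r+1)$-partite graph $K_{r+1}(s)$ with parts of size $s:=|V(L)|$ inside $R$: that is, $r+1$ groups of $s$ clusters each, with every cross-group pair regular of relative density at least $\delta$.

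Now I would embed $L$. Fix a proper $(r+1)$-colouring of $L$ and assign the vertices of colour class $c$ to distinct clusters of the $c$-th group, which is possible because each group has $s\ge|V(L)|$ clusters. Every edge of $L$ then joins two clusters forming a regular pair of relative density at least $\delta$, so the sparse counting lemma guarantees $\Omega(p^{e(L)}\prod_i|V_i|)$ copies of $L$ respecting this cluster system. To upgrade one of these to an induced copy, sparseness does the work for free: for each non-edge $uv$ of $L$, the probability that the chosen images are nonetheless adjacent in $H$ is $O(p)=o(1)$, since between any two clusters (and inside any one cluster) the $H$-density is $O(p)$ by upper-uniformity; equivalently, the expected number of counted copies carrying a prescribed extra edge is smaller by a factor $O(p)$. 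A union bound over the $O(1)$ non-edges of $L$ then shows that all but an $o(1)$-fraction of the counted copies are induced, so an induced copy of $L$ exists, contradicting $H\in\FF_L$.

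The crux is the embedding in the sparse regime. The classical (dense) embedding lemma fails for subgraphs of $G(n,p)$, so the one genuinely nontrivial ingredient is the sparse counting lemma for $L$, whose validity is exactly what the hypothesis $p\gg n^{-1/m_2(L)}$ buys; everything else is bookkeeping. By contrast, the passage from a homomorphic copy to an induced copy, which would be the delicate point in the dense setting, is here essentially automatic: because $p\to0$ the non-edges of $L$ are satisfied by almost all embeddings, so the induced problem is no harder than embedding the edges of $L$.
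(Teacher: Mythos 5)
Your proposal is correct in outline and, at its core, uses the same key trick as the paper: produce \emph{many} (not necessarily induced) copies of $L$ in any subgraph $H$ with $(1-\frac{1}{\chi(L)-1}+\eps)p\binom n2$ edges, and then observe that because $p\ll 1$ the expected number of copies of each strict supergraph of $L$ in $G$ is smaller by a factor $O(p)$, so by Markov's inequality almost all of the produced copies must be induced. (Do phrase that last step as a count of supergraph copies in $G$ plus Markov, as you do in your second formulation; ``the probability that the chosen images are adjacent'' is not meaningful when $H$ is adversarial.) Where you genuinely diverge is in how the many copies are obtained: the paper simply invokes the supersaturated form of the sparse random Tur\'an theorem of Conlon--Gowers and Schacht as a black box, whereas you re-derive that supersaturation via the sparse regularity lemma, upper-uniformity, the Erd\H{o}s--Stone theorem applied to the reduced graph, and the counting form of the Kohayakawa--{\L}uczak--R\"odl conjecture. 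This is a legitimate alternative, but be aware of what it buys and what it costs. It is not more elementary: the counting KŁR lemma is a theorem of the same depth as (and in the literature essentially equivalent to) the supersaturation statement the paper cites, so you have traded one deep black box for another plus a page of regularity bookkeeping. It is also slightly less general: upper-uniformity and a meaningful sparse regular partition require $p\gg 1/n$ (and your stated hypothesis $p\gg \log n/n$ is stronger still), so for graphs $L$ with $m_2(L)<1$ --- e.g.\ $L$ a matching, where the proposition claims the range $p\gg n^{-2}$ --- your argument does not literally cover the sparsest part of the range and those degenerate cases would need a separate direct argument. Within the range where it applies, though, your proof is sound.
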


We will derive the proposition from the following ``supersaturated'' version
of the random analogue of Tur\'an's theorem in $G(n,p)$ proved by Conlon and Gowers~\cite{CG}
(under an additional technical assumption on $L$) and by Schacht~\cite{Sch} (in full generality):

\begin{theo}[{\cite{CG, Sch}}]
  \label{theo:supersaturated-Turan}
  For every nonempty graph $L$ and every $\eps > 0$, there exists $\delta > 0$
  such that the following holds for every $p = p(n) \gg n^{-1/m_2(L)}$.  With high probability,
  every subgraph $G'$ of the random graph $G(n,p)$ with
  \[
    |E(G')| \ge \left(1-\frac{1}{\chi(L)-1}+\eps\right)p\binom{n}{2}
  \]
  contains at least $\delta n^{|V(L)|} p^{|E(L)|}$ copies of $L$.
\end{theo}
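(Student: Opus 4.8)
The plan is to reduce this sparse supersaturation statement to its classical dense counterpart---the Erd\H{o}s--Simonovits supersaturation theorem---through the sparse regularity method. Let me first stress why the cheap approach fails: one cannot hope to delete a single edge from each copy of $L$ in $G'$, reach a copy-free subgraph, and appeal to the non-supersaturated (threshold) form of the statement, because above the threshold the target count $\delta n^{|V(L)|}p^{|E(L)|}$ already dwarfs the total number $\Theta(pn^2)$ of available edges. Hence there is no budget for such deletions, and a genuine counting argument is forced upon us.

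First I would apply the Kohayakawa--R\"odl sparse regularity lemma to $G'$, using that $G = G(n,p)$ is whp upper-uniform in the relevant range of $p$; this produces a partition of $V(G)$ into a bounded number $t$ of equal parts in which all but an $\eta$-fraction of the pairs are $(\eta,p)$-regular. I then form a reduced graph $R$ on the vertex set $[t]$, joining two parts precisely when the pair between them is regular and has density at least $d\cdot p$. A routine accounting shows that the edges of $G'$ sitting inside parts, inside irregular pairs, or inside pairs of density below $dp$ number only $O\bigl((\tfrac1t + \eta + d)\,pn^2\bigr)$; so, for $t$ large and $\eta,d$ small, almost all edges of $G'$ are carried by the ``good'' pairs, which forces $e(R) \ge \bigl(1 - \tfrac{1}{\chi(L)-1} + \tfrac{\eps}{3}\bigr)\binom{t}{2}$. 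As this density lies strictly above the Tur\'an threshold $1 - \tfrac{1}{\chi(L)-1}$, the dense supersaturation theorem yields a constant $c = c(\eps) > 0$ for which $R$ contains at least $c\,t^{|V(L)|}$ copies of $L$.

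It remains to turn each such copy of $L$ in $R$ into many honest copies of $L$ in $G'$, and this is where the real difficulty lies. Given a copy of $L$ on parts $V_{i_1},\dots,V_{i_{|V(L)|}}$, the pairs indexed by the edges of $L$ are $(\eta,p)$-regular of density at least $dp$, and I would invoke a \emph{sparse counting (embedding) lemma} to conclude that they span at least $c'(\eta,d)\,(n/t)^{|V(L)|}p^{|E(L)|}$ copies of $L$ inside $G'$. Since copies arising from distinct reduced copies live on distinct tuples of parts, these counts simply add, and summing over the $c\,t^{|V(L)|}$ reduced copies delivers the desired $\delta\,n^{|V(L)|}p^{|E(L)|}$ copies. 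The main obstacle is precisely this embedding step: in the sparse regime, regularity of the pairs does \emph{not} by itself guarantee the expected number of copies, in sharp contrast to the dense case. The rescue is the theorem of Kohayakawa--\L{}uczak--R\"odl, which guarantees that $G(n,p)$ whp admits no family of regular, sufficiently dense bipartite graphs arranged along the edges of $L$ that spans far fewer than the expected number of copies. Verifying this probabilistic embedding lemma throughout the range $p \gg n^{-1/m_2(L)}$ is the analytic heart of the proof; it is exactly the content---in equivalent form---of the results of Conlon--Gowers and Schacht quoted above, and can alternatively be extracted from the hypergraph container method.
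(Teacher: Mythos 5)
Your proposal is correct in its overall logic, but it takes a genuinely different route from the paper's. The paper gives no regularity argument at all: in the remark following the statement, it observes that the original derivations of the sparse Tur\'an theorem already yield supersaturation if one tracks stronger intermediate results---using the stronger conclusion of \cite[Theorem~9.4]{CG} when deriving \cite[Theorem~10.9]{CG}, and replacing \cite[Theorem~3.3]{Sch} by its stronger version \cite[Lemma~3.4]{Sch} in the derivation of \cite[Theorem~2.7]{Sch}---and it points to \cite[Theorem~1.10]{CGSSch} for a stronger form. You instead reduce to the dense setting: Kohayakawa--R\"odl sparse regularity applied to $G'$, Erd\H{o}s--Simonovits supersaturation in the reduced graph, and a sparse embedding step to convert reduced copies into copies in $G'$; this is essentially the derivation carried out in \cite{CGSSch}, so the route is sound, and it has the virtue of exhibiting exactly where the sparse difficulty sits (the failure of the counting lemma for $(\eta,p)$-regular pairs). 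Two corrections are needed, though. First, the embedding step is not a ``theorem of Kohayakawa--\L{}uczak--R\"odl,'' nor is it ``the results of Conlon--Gowers and Schacht in equivalent form'': it is the K\L{}R \emph{conjecture}, in its counting version, and its known proofs go through the Conlon--Gowers/Schacht transference machinery \cite{CGSSch} or through hypergraph containers; consequently your argument is not more self-contained than the paper's---it stacks an extra layer on top of the same machinery, which is precisely why the paper opts for the direct citation. Second, copies of $L$ in $G'$ arising from distinct reduced copies need not be distinct when those reduced copies share their vertex set in $R$, so the counts do not ``simply add''; however, each copy in $G'$ is charged at most $|V(L)|!$ times, so the final count loses only a constant factor and the conclusion stands.
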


\begin{remark}
  Even though the above result is not explicitly stated in either~\cite{CG} or~\cite{Sch},
  one obtains it easily by: (i)~using the stronger conclusion of~\cite[Theorem~9.4]{CG}
  while deriving~\cite[Theorem~10.9]{CG};  (ii)~replacing~\cite[Theorem~3.3]{Sch} with
  its stronger version~\cite[Lemma~3.4]{Sch} in the derivation of~\cite[Theorem~2.7]{Sch}.
  A stronger version of Theorem~\ref{theo:supersaturated-Turan},
  with optimal dependence of $\delta$ on $\eps$ and $H$, is stated and proved
  in~\cite[Theorem~1.10]{CGSSch}.
\end{remark}

\begin{proof}[Proof of Proposition~\ref{prop:sparse-induced-Turan}]
  It suffices to show that, for every fixed $\eps > 0$, with high probability,
  every subgraph $G' \subseteq G$ with at least $(1-\frac1{\chi(L)-1}+\eps)p\binom{n}{2}$
  edges contains more (not necessarily induced) copies of $L$ than the total
  number of copies of all strict supergraphs of $L$ (with the same vertex set) in $G$.  (We note that a similar idea
  was used in~\cite{CDLFRSch} to derive upper bounds on induced Ramsey numbers.)
  On the one hand, Theorem~\ref{theo:supersaturated-Turan} implies that each $G'$
  as above contains at least $\delta n^{|V(L)|}p^{|E(L)|}$ copies of $L$.  On the other hand,
  a simple application of Markov's inequality gives that, with probability $1-O(\sqrt{p})$, say,
  the total number of copies of all strict supergraphs of $L$ is at most $O(n^{|V(L)|}p^{|E(L)|+1/2})$.
  Since we assume that $p \ll 1$, the latter of the above two quantities is much smaller.
\end{proof}

The following statement is a straightforward corollary of the definition of $k$
and Proposition~\ref{prop:sparse-induced-Turan} invoked with some $L \notin \PP$
with $\chi(L) = k$.

\begin{coro}
  \label{l21}
  For every $\eps>0$, there exists a graph $H$ such that
  \[
    \ex(H, \PP) < \left(1-\frac{1}{k-1}+\eps\right) e(H).
  \]
\end{coro}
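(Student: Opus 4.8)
The plan is to realize the desired graph $H$ as a single typical instance of a sparse binomial random graph, so that Proposition~\ref{prop:sparse-induced-Turan} does all the heavy lifting. First I would use the definition of $k = k(\PP)$ as the minimum chromatic number of a graph outside $\PP$ to fix a graph $L \notin \PP$ with $\chi(L) = k$. Since $\PP$ is hereditary and $L \notin \PP$, no graph belonging to $\PP$ can contain $L$ as an induced subgraph; hence, as already recorded just before the proposition, every graph $G$ satisfies the monotone relation $\ex(G, \PP) \le \ex(G, \FF_L)$. It therefore suffices to exhibit a single graph $H$ for which $\ex(H, \FF_L) < (1 - \frac{1}{k-1} + \eps)\, e(H)$, and this $H$ will be produced probabilistically.

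Next I would choose the edge probability. Because $L$ is a fixed graph, $m_2(L)$ is a fixed positive constant, so I may set, say, $p = p(n) = n^{-c}$ with $0 < c < 1/m_2(L)$; this guarantees $n^{-1/m_2(L)} \ll p \ll 1$, precisely the range required by the proposition. Applying Proposition~\ref{prop:sparse-induced-Turan} to this $L$ and this $p$, the random graph $G = G(n,p)$ whp satisfies $\ex(G, \FF_L) \le (1 - \frac{1}{k-1} + o(1))\, p\binom{n}{2}$, where I have substituted $\chi(L) = k$. Independently, the number of edges of $G(n,p)$ is whp concentrated around its expectation, so whp $e(G) = (1 + o(1))\, p\binom{n}{2}$.

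Finally I would combine the two whp events by a union bound: whp both hold at once, and then $\ex(G, \PP) \le \ex(G, \FF_L) \le (1 - \frac{1}{k-1} + o(1))\, e(G)$. For all sufficiently large $n$ the combined $o(1)$ error drops below $\eps$, so the event $\{\ex(G, \PP) < (1 - \frac{1}{k-1} + \eps)\, e(G)\}$ has probability tending to $1$, and in particular is nonempty for large $n$. Any graph in this event serves as the required $H$. I do not expect a genuine obstacle here beyond routine bookkeeping: the only thing to verify is that the two $o(1)$ contributions (one from the proposition, one from edge concentration) can be simultaneously forced below $\eps$, which is immediate since each tends to $0$. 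All the substantive difficulty has already been absorbed into Proposition~\ref{prop:sparse-induced-Turan}.
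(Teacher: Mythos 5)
Your proposal is correct and follows exactly the route the paper intends: the paper presents Corollary~\ref{l21} as a ``straightforward corollary'' of the definition of $k$ and of Proposition~\ref{prop:sparse-induced-Turan} invoked with some $L \notin \PP$ satisfying $\chi(L) = k$, which is precisely your argument (fix such an $L$, use $\ex(\cdot,\PP) \le \ex(\cdot,\FF_L)$, pick $p$ in the admissible sparse range, combine with edge concentration, and take a typical instance of $G(n,p)$ as $H$). The bookkeeping you describe is all that is needed.
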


We will now deduce the statement of Theorem~\ref{t11} from
Corollary~\ref{l21} and the following standard probabilistic estimate.
\begin{lemma}
\label{l23}
Let $H$ be a fixed graph, let $\eps>0$ be a small positive real and
let $p \in (0,1)$ be a fixed real. Let $G=G(n,p)$ be the random graph
and let $c=c(H,n,p)$ be the expected number of induced copies of $H$
in $G$ that contain a fixed edge of $G$. Then, whp, for every edge
$e$ of $G$
the number of induced copies of $H$ in $G$ containing $e$ is at least
$(1-\eps)\mu$ and at most $(1+\eps)\mu$.
\end{lemma}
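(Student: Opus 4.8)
The plan is to fix a potential edge $e=\{u,v\}$, condition on the event $e\in E(G)$, and show that the number $X_e$ of induced copies of $H$ containing $e$ is so sharply concentrated around its conditional mean $\mu:=c$ that the failure probability for a single $e$ survives a union bound over all $\binom{n}{2}$ choices of $e$. Writing $h=|V(H)|$, I would first set up $X_e$ as a sum: conditioning on $\{u,v\}$ being an edge (which is exactly the event $e\in E(G)$), we have $X_e=\sum_S Z_S$, where $S$ ranges over the $h$-subsets of $V(G)$ containing $u$ and $v$, and $Z_S\in\{0,1,\dots,h!\}$ counts the induced embeddings of $H$ into $G[S]$ that send $e$ onto an edge of $H$. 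Each $Z_S$ depends only on the presence/absence of the pairs inside $S$ other than $\{u,v\}$, and these are independent Bernoulli$(p)$ variables. Since $p\in(0,1)$ is fixed (so both edges and non-edges occur with positive probability, which is what makes \emph{induced} copies plentiful) and $H$ has at least one edge, a direct count gives $\mu=\Theta(n^{h-2})$, with the implied constant depending only on $H$ and $p$. (If $H$ is edgeless or has at most two vertices the statement is immediate, since then $X_e$ is deterministic.)

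The core step is a bounded-differences (Azuma--McDiarmid) argument applied to $X_e$ viewed as a function of the $\binom{n}{2}-1$ independent indicators recording the presence of each pair other than $\{u,v\}$. The key observation, which is what makes the bound strong enough, is that the Lipschitz coefficients are highly non-uniform. Flipping the status of a pair $\{a,b\}$ changes $X_e$ by at most $h!$ times the number of admissible sets $S$ containing both $a$ and $b$; for the $O(n)$ pairs incident to $\{u,v\}$ this is $O(n^{h-3})$, whereas for the $\Theta(n^2)$ pairs disjoint from $\{u,v\}$ it is only $O(n^{h-4})$. Hence the sum of squared coefficients is
\[
  \sum_i c_i^2 = O\!\left(n\cdot n^{2(h-3)}+n^2\cdot n^{2(h-4)}\right)=O\!\left(n^{2h-5}\right),
\]
dominated by the few high-influence pairs meeting $u$ or $v$.

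Taking $t=\eps\mu=\Theta(\eps n^{h-2})$ we obtain $t^2/\sum_i c_i^2 = \Omega(\eps^2 n)$, so McDiarmid's inequality gives
\[
  \Pr\big[\,|X_e-\mu|\ge \eps\mu \ \big|\ e\in E(G)\,\big]\ \le\ 2\exp\!\left(-\Omega(\eps^2 n)\right).
\]
I would then union-bound over the at most $n^2$ potential edges $e$: since $n^2\exp(-\Omega(\eps^2 n))=o(1)$, whp no edge $e\in E(G)$ is bad, which is precisely the assertion of the lemma.

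The main obstacle to flag is exactly this non-uniformity of influences. A crude application of the bounded-differences inequality using the worst-case Lipschitz constant $O(n^{h-3})$ uniformly across all $\Theta(n^2)$ coordinates would give only $\sum_i c_i^2=O(n^{2h-4})$ and hence an $O(1)$ exponent, far too weak to beat the $n^2$ union bound. The whole point is therefore to separate the $O(n)$ high-influence pairs incident to $\{u,v\}$ from the many low-influence pairs disjoint from them; only after weighting the coefficients correctly does the exponent improve to $\Omega(\eps^2 n)$, which comfortably dominates the loss from the union bound.
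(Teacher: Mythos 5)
Your proposal is correct and follows essentially the same route as the paper: both condition on the presence of $e$, apply a bounded-differences/Azuma-type inequality to the edge-exposure filtration, and crucially exploit the non-uniform Lipschitz coefficients ($O(n)$ pairs meeting $e$ with influence $O(n^{h-3})$ versus $\Theta(n^2)$ remaining pairs with influence $O(n^{h-4})$) to get $\sum_i c_i^2 = O(n^{2h-5})$ before the union bound over all edges. The only cosmetic difference is that you phrase the concentration step via McDiarmid's inequality with deviation $\eps\mu$, yielding $\exp(-\Omega(\eps^2 n))$, whereas the paper applies Azuma directly with a smaller deviation of order $\mu\sqrt{\log n/n}$, which already suffices.
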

\begin{proof}
  Fix an edge $e$ of $K_n$, assume it belongs to $E(G)$,
  and apply the edge exposure martingale to the
  random variable $X_e$ counting the number of induced copies of $H$ in $G$
  that contain $e$. As $p$ and $H$ are fixed, the expectation of this
random variable is $\mu=\Theta(n^{h-2})$, where $h$ is the number of vertices of
$H$ and the hidden constant in the $\Theta$-notation is a function of $p$ and
$H$. The existence or nonexistence of each of the potential
$2n-4$ edges of $G$ (besides $e$) incident with $e$ can change the
value of $X_e$ by at most $O(n^{h-3})$. Similarly, each of the
${{n-2} \choose 2}$ other edges can change the value of $X_e$ by at most
$O(n^{h-4})$. Therefore, by Azuma's Inequality (see, e.g., \cite[Chapter~7]{AS}),
the probability that $X_e$ deviates from its expectation by
$$
\lambda \cdot [ (2n-4) O(n^{2h-6})+{{n-2} \choose 2} O(n^{2h-8})]^{1/2}
=\lambda \cdot O(n^{h-5/2})
$$
is at most $2e^{-\lambda^2/2}$.
In particular, the probability that $X_e$ deviates from its expectation
$\mu$ by $a \cdot \mu \frac{\sqrt{\log n}}{\sqrt n}$ is at most
$n^{-\Omega(a^2)}$. Taking a sufficiently large constant $a$,  this
implies the desired result by the union bound.
\end{proof}

\begin{proof}[Proof of Theorem~\ref{t11}]
  Let $\PP$ and $k=k(\PP)$ be as in the statement of the theorem.
  Fix a small real $\eps>0$ and let $H$
  be the graph from Corollary~\ref{l21}.  Fix $p \in(0,1)$
  and let $G=G(n,p)$ be the binomial random graph.
  Let $\HH=\{H_i\}_{i \in I}$
  be the collection of all induced copies of $H$ in $G$.
  By Lemma~\ref{l23}, whp the number of subgraphs $H_i$ containing
any edge of $G$ is at least $(1-\eps)\mu$ and at most $(1+\eps)\mu$, where $\mu>0$
is the expected number of such subgraphs. Assuming this holds, let
$G'$ be a subgraph of $G$ that belongs to $\PP$.  We need
to estimate the number of edges of $G'$ from above.  We will do so by comparing
two bounds on the size of the set $S$ of all ordered pairs  $(e,H_i)$, where $e$ is an edge
of $G'$, $H_i \in \HH$, and $e$ is also an edge of $H_i$. Since every
edge of $G'$ is contained in at least $(1-\eps)\mu$ of the graphs $H_i$,
 we have:
\begin{equation}
\label{e22}
|S| \geq |E(G')| (1-\eps)\mu.
\end{equation}
On the other hand, since $H$ satisfies the assertion of Lemma~\ref{l21},
\begin{equation}
\label{e23}
|S| \leq |\HH| \left(1-\frac{1}{k-1}+\eps\right)|E(H)|
\end{equation}
as every $H_i$ can contain at most $(1-\frac{1}{k-1}+\eps)|E(H)|$
edges of $G'$. Indeed,
$$(V(H_i),E(G') \cap E(H_i))
$$
is an induced subgraph
of $G'$, and as $G'$ lies in
$\PP$ which is hereditary, so does this graph. Therefore, each $H_i \in \HH$
can contain  at most $(1-\frac{1}{k-1}+\eps)|E(H)|$ edges of $G'$. Finally,
since no edge of $G$ lies in more than $(1+\eps)\mu$ members of
$\HH$, it follows that
\begin{equation}
\label{e24}
|\HH| |E(H)| \leq |E(G)| (1+\eps)\mu.
\end{equation}

Combining \eqref{e22}, \eqref{e23}, and \eqref{e24}, we conclude that
\begin{multline*}
  |E(G')| (1-\eps) \mu \leq |S| \leq  |\HH| \left(1-\frac{1}{k-1}+\eps\right)|E(H)| \\
  \leq \frac{|E(G)| (1+\eps)\mu}{|E(H)|} \left(1-\frac{1}{k-1}+\eps\right)|E(H)|
  = \left(1-\frac{1}{k-1}+\eps\right) |E(G)| (1+\eps)\mu.
\end{multline*}
Therefore
\[
|E(G')| \leq \left(1-\frac{1}{k-1}+\eps\right) |E(G)| \frac{1+\eps}{1-\eps}
<\left(1-\frac{1}{k-1}+4 \eps\right) |E(G)|,
\]
where here we used that $\eps>0$ is small. Since $\eps$ can be chosen
to be arbitrarily small, this shows that whp
$$
\ex(G,\PP) \leq \left(1-\frac{1}{k-1}+o(1)\right) |E(G)|
=\left(1-\frac{1}{k-1}+o(1)\right) p {n \choose 2}.
$$
This, together with~\eqref{e21}, completes the proof of the theorem.
\end{proof}

\section{Concluding remarks}
\begin{itemize}
\item
Theorem~\ref{t11} can be proved in a more self-contained way, using
Szemer\'edi's Regularity Lemma and following
the approach in~\cite[Section~4.4]{AGKMS}. Since unlike the
proof described here, this proof does not work
for $p \leq n^{-\eps}$ when $\eps>0$ is fixed,
we omit the details.
\item
  It may be interesting to characterize all edge probabilities $p$ for which
  the assertion of Theorem~\ref{t11} holds. It is not difficult to describe hereditary
  properties (even monotone ones) for which
  the fraction of edges of $G(n,p)$ that lie in a maximum subgraph
  that belongs to the property changes, whp, several times
  as $p=p(n)$ increases from $0$ to $1$.
\item
  Our main result, Theorem~\ref{t11}, shows that if $\PP$ misses a bipartite graph,
then for $G=G(n,p)$ we have whp $\ex(G,\PP)=o(n^2)$. It would be interesting
to provide a more accurate estimate for this ``bipartite" case.
It seems plausible that in this case
$\ex(G,\PP)\le n^{2-\eps}$ for some $\eps=\eps(\PP)>0$.
\item
The typical edit distance of a random graph from a hereditary property
is very different from the typical minimum number of edges that have to be
deleted from it to get a graph that belongs to the property. The latter
quantity is the one studied here, the former is treated, for example,
in~\cite{ASt}. An example illustrating the difference is that of
the property of avoiding an induced copy
of a long even cycle~$C_{2k}$.
Theorem~\ref{t11} shows that for, say, $p=1/2$, whp one has to delete
nearly all $(1/4+o(1))n^2$
edges of $G(n,1/2)$ to get a subgraph that contains no
induced copy of $C_{2k}$. On the other hand, since the vertices of
$C_{2k}$ cannot be covered by $k-1$ cliques it suffices, whp, to
add to $G(n,1/2)$ only $(1/4+o(1))n^2/(k-1)$ edges in order to
cover all its vertices by $k-1$ cliques, ensuring that the resulting
graph will not contain an induced copy of $C_{2k}$.
%\item
%It may be interesting to obtain tighter versions of Theorem
%\ref{t11} for specific hereditary properties $\PP$.
%It may also be interesting to study  the quantity $ex(G,\PP)$
%for random graphs $G$ and some natural non-hereditary properties $\PP$.
\end{itemize}

\section*{Acknowledgment}
The initial results in this note were obtained when the
second author visited the Department of Mathematics of Princeton University.
He would like to thank the department for the hospitality.

\bibliographystyle{amsplain}
\bibliography{exher}

% \begin{thebibliography}{99}
% \bibitem{AGKMS}
% Noga Alon, Anna Gujgiczer, J\'anos K\"orner,
% Aleksa Milojevi\'c and G\'abor Simonyi, Structured Codes of Graphs,
% arXiv:2202.06810, 2022.
% %\bibitem{AFN}
% %Noga Alon, Eldar Fischer and Ilan Newman,
% %Testing of bipartite graph properties, SIAM J. Comput.
% %37 (2007), 959--976.
% \bibitem{AS}
% Noga Alon and Joel H. Spencer, The Probabilistic Method,
% Fourth Edition, Wiley, 2016, xiv+375 pp.
% \bibitem{ASt}
% Noga Alon and Uri Stav,
% What is the furthest graph from a hereditary property?
% Random Structures and Algorithms 33 (2008), 87--104.
% \bibitem{CDLFRSch}
%   David Conlon, Domingos Dellamonica, Steven La Fleur, Vojt\v{e}ch R\"odl and Mathias Schacht,
%   A note on induced Ramsey numbers, A journey through discrete mathematics, Springer, 2017, pp.~357-–366
% \bibitem{CG}
% David Conlon and William T. Gowers,
% Combinatorial theorems in sparse random sets,
% Ann. of Math. 184 (2016), no. 2, 367--454.
% \bibitem{CGSSch}
%   David Conlon, William T. Gowers, Wojciech Samotij and Mathias Schacht,
%   On the K{\L}R conjecture in random graphs,
%   Israel Journal of Mathematics 203 (2014), 535--580.
% %\bibitem{Sa}
% %Norbert Sauer, On the density of families of sets,
% %J. Combin. Theory Ser. A 13 (1972), 145--147.
% %\bibitem{Sh}
% %Saharon Shelah, A combinatorial problem; stability and order for
% %models and theories in infinitary languages,
% %Pacific J. Math. 41 (1972), 247--271.
% \bibitem{Sch}
% Mathias Schacht,
% Extremal results for random discrete structures,
% Ann. of Math. 184 (2016), no. 2, 333--365.
% \end{thebibliography}
\end{document}